\newtheorem{thrm}{Theorem}[section]
\newtheorem{lmm}[thrm]{Lemma}
\newtheorem{prpstn}[thrm]{Proposition}
\newtheorem*{cnjctr}{Conjecture}
\numberwithin{equation}{section}
\newtheoremstyle{named}{}{}{\itshape}{}{\bfseries}{.}{.5em}{\thmnote{#3}}
\theoremstyle{named}
\begin{document}
\title{Bounded length intervals containing two primes and an almost-prime II}
\author{James Maynard}
\address{Mathematical Institute, 24–-29 St Giles', Oxford, OX1 3LB}
\email{maynard@maths.ox.ac.uk}
\thanks{Supported by EPSRC Doctoral Training Grant EP/P505216/1 }
\date{}
\subjclass[2010]{11N05, 11N35, 11N36}
\begin{abstract}
Zhang has shown there are infinitely many intervals of bounded length containing two primes. It appears that the current techniques cannot prove that there are infinitely many intervals of bounded length containing three primes, even if strong conjectures such as the Elliott-Halberstam conjecture are assumed. We show that there are infinitely many intervals of length at most $10^8$ which contain two primes and a number with at most 31 prime factors.
\end{abstract}
%\begin{abstract}
%Goldston, Pintz and Y\i ld\i r\i m have shown that if the primes have `level of distribution' $\theta$ for some $\theta>1/2$ then there exists a constant $C(\theta)$, such that there are infinitely many integers $n$ for which the interval $[n,n+C(\theta)]$ contains two primes. We show under the same assumption that for any integer $k\ge 1$ there exists constants $D(\theta,k)$ and $r(\theta,k)$, such that there are infinitely many integers $n$ for which the interval $[n,n+D(\theta,k)]$ contains two primes and $k$ almost-primes, with all of the almost-primes having at most $r(\theta,k)$ prime factors. If $\theta$ can be taken as large as $0.99$, and provided that numbers with $2$, $3$, or $4$ prime factors also have level of distribution $0.99$, we show that there are infinitely many integers $n$ such that the interval $[n,n+90]$ contains 2 primes and an almost-prime with at most 4 prime factors. This extends a result of Pintz \cite{Pintz}.
%\end{abstract}
\maketitle
\section{Introduction}
We are interested in trying to understand how small gaps between primes can be. If we let $p_n$ denote the $n^{\text{th}}$ prime, it is conjectured that
\begin{equation}
\liminf_n (p_{n+1}-p_n)=2.
\end{equation}
This is the famous twin prime conjecture. More generally, we can look at the difference $p_{n+k}-p_n$. It would follow from the Hardy-Littlewood prime $k$-tuples conjecture that
\begin{equation}
\liminf_n(p_{n+k}-p_n)\ll k\log{k}.
\end{equation}
In particular, we expect that $\liminf_{n}(p_{n+1}-p_n)$ is finite for each $k$.

For $k=1$ the recent breakthrough of Zhang \cite{Zhang} has shown unconditionally that
\begin{equation}
\liminf_n (p_{n+1}-p_n)<7\cdot 10^7.
\end{equation}
For $k>1$ we have much less precise knowledge. The best results are due to Goldston, Pintz and Y\i ld\i r\i m \cite{GPYIII}, who have shown
\begin{equation}
\liminf_n\frac{p_{n+k}-p_n}{\log{p_n}}<e^\gamma (\sqrt{k}-1)^2.
\end{equation}
In particular, we do not know whether $\liminf(p_{n+k}-p_n)$ is finite when $k>1$.

Both unconditional results are based on the `GPY method' for showing the existence of small gaps between primes. This method relies heavily on results about primes in arithmetic progressions. We say that the primes have `level of distribution' $\theta$ if, for any constant $A$, there is a constant $C=C(A)$ such that
\begin{equation}
\sum_{q\le x^\theta(\log{x})^{-C}}\max_{\substack{a\\(a,q)=1}}\Biggl|\sum_{\substack{p\equiv a \pmod{q}\\ p\le x}}1-\frac{\text{Li}(x)}{\phi(q)}\Biggr|\ll_A\frac{x}{(\log{x})^A}.
\end{equation}
The Bombieri-Vinogradov theorem states that the primes have level of distribution $1/2$, and the major ingredient in Zhang's proof that $\liminf(p_{n+1}-p_n)$ is finite is a slightly weakened version of the statement that the primes have level of distribution $1/2+1/584$.

It is believed that further improvements in the level of distribution of the primes are possible, and Elliott and Halberstam \cite{ElliottHalberstam} conjectured the following much stronger result.
\begin{cnjctr}[Elliott-Halberstam Conjecture]
For any fixed $\epsilon>0$, the primes have level of distribution $1-\epsilon$.
\end{cnjctr}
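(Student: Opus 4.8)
\textit{Proof proposal.} The plan is to reduce the error term for primes in arithmetic progressions to estimates for bilinear (and higher) forms, and then to bound those forms using the deepest available analytic input. First I would replace the indicator of the primes $p\le x$ by a combinatorial decomposition — Vaughan's identity, or better the Heath-Brown identity of a fixed level $k$ — so that the sum in the definition of the level of distribution, with $x^\theta$ replaced by $x^{1-\epsilon}$, breaks into $O_\epsilon(1)$ pieces, each attached to a Dirichlet convolution $\sum_{mn=N}\alpha_m\beta_n$ with $N\asymp x$. In the ``Type I'' pieces one of the two factors is essentially the indicator of an interval and is long; these are handled by the Dirichlet hyperbola method together with completion of sums and the large sieve, and they contribute with level of distribution arbitrarily close to $1$. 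The genuine difficulty is concentrated in the ``Type II'' pieces, where both $m$ and $n$ run over dyadic ranges $M\le m<2M$, $N\le n<2N$ with $MN\asymp x$ and $M,N$ bounded away from $1$ and from $x$.

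For those pieces I would apply Linnik's dispersion method: square out, push the summations over the modulus $q$ and over the variable $m$ to the outside, and reduce matters to establishing, for $MN\asymp x$ in the awkward middle range,
\begin{equation}
\sum_{q\le x^{1-\epsilon}}\ \max_{(a,q)=1}\Biggl|\sum_{\substack{mn\equiv a\pmod{q}\\ M\le m<2M\\ N\le n<2N}}\alpha_m\beta_n-\frac{1}{\phi(q)}\sum_{\substack{(mn,q)=1\\ M\le m<2M\\ N\le n<2N}}\alpha_m\beta_n\Biggr|\ll_A\frac{x}{(\log{x})^A}.
\end{equation}
After opening the square and detecting the congruence $mn\equiv a\pmod{q}$ by additive characters, the diagonal terms produce the main term and one is left with bounding sums of incomplete Kloosterman-type sums $\sum_{q}\sum_{n_1,n_2}S(a\,\overline{n_1n_2},1;q)$ over the relevant ranges. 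The required saving must then come from cancellation in sums of Kloosterman sums: Weil's bound (via Deligne) for individual sums, fed into the large sieve, recovers level of distribution $1/2$, and to go beyond one invokes the Kuznetsov trace formula together with nontrivial bounds for the associated spectral exponential sums — this is the Bombieri--Fouvry--Iwaniec circle of ideas.

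The main obstacle is precisely this last step, and it is why the statement is a conjecture rather than a theorem. The dispersion method combined with spectral theory is only known to deliver a level of distribution \emph{slightly} beyond $1/2$, and even then only for moduli $q$ restricted to a well-factorable (or smooth) set, or after averaging nontrivially over the residue class $a$ — all of which are weaker than what is demanded here, namely individual residue classes, arbitrary moduli, and an essentially full level. Pushing the Type II estimate to $q\le x^{1-\epsilon}$ would require essentially square-root cancellation in long sums of Kloosterman sums, or equivalently sharp control of the spectral gap and of cancellation in sums of Hecke eigenvalues twisted by additive characters, in ranges where no such bound is currently available; this is of the same order of difficulty as the Riemann Hypothesis for the relevant families of $L$-functions. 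Absent a genuinely new idea here — or a new sieve identity that sidesteps the bad Type II range altogether — the programme stalls, and in the body of the paper the Elliott--Halberstam conjecture is therefore used only as a hypothesis.
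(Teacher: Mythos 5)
This statement is the Elliott--Halberstam conjecture itself; the paper does not prove it and cannot, since it is a famous open problem stated there only as a hypothesis (attributed to Elliott and Halberstam, with the remark, via Friedlander--Granville, that level of distribution $1$ is false, so $1-\epsilon$ is the strongest admissible form). Your text is therefore not a proof but a survey of the standard attack, and to your credit you say so explicitly in your final paragraph. The concrete gap is exactly where you locate it: after the Heath-Brown/Vaughan decomposition, the Type II bilinear estimate with moduli up to $x^{1-\epsilon}$, individual residue classes $a$, and the maximum over $a$ inside the sum is not known. The dispersion method plus Weil's bound for Kloosterman sums recovers only level $1/2$ (Bombieri--Vinogradov), and the spectral refinements of Bombieri--Fouvry--Iwaniec (and the Zhang-type estimates this paper actually relies on, at level $1/2+1/584$ in a weakened form) only go slightly beyond $1/2$, and only for smooth or well-factorable moduli or with averaging over $a$ --- none of which yields the uniform statement demanded by the conjecture. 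Asserting ``the required saving must then come from cancellation in sums of Kloosterman sums'' and then conceding that no such bound exists in the needed range means the central step of the argument is missing, so the proposal establishes nothing beyond what is already known.

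For the purposes of this paper no proof is needed or expected: the conjecture is quoted only to contextualize what is known conditionally (e.g.\ $\liminf_n(p_{n+1}-p_n)\le 16$ under EH), while the actual results proved here use Zhang's unconditional level-of-distribution input, not EH. If you want to engage with the analytic machinery you describe, the productive target in this paper is Proposition \ref{prpstn:MainProp} and the lemmas feeding into it, where the Zhang-type equidistribution estimate for smooth moduli is genuinely used.
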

Friedlander and Granville \cite{FriedlanderGranville} have shown that the primes do not have level of distribution $1$, and so the Elliott-Halberstam conjecture represents the strongest possible result of this type.

Under the Elliott-Halberstam conjecture the GPY method \cite{GPYI} shows that for $k=1$
\begin{equation}
\liminf_n (p_{n+1}-p_n)\le 16.
\end{equation}
If we consider $k>1$, however, we are unable to prove such strong results, even under the full strength of the Elliott-Halberstam conjecture. In particular we are unable to prove that there are infinitely many intervals of bounded length that contain at least 3 primes. The GPY methods can still be used, but even with the Elliott-Halberstam conjecture we are only able to prove that
\begin{equation}
\liminf_n \frac{p_{n+2}-p_n}{\log{p_n}}=0.
\end{equation}
Therefore it  appears that we are unable to show that $\liminf(p_{n+2}-p_n)$ is finite with the current methods. As an approximation to the conjecture, it is common to look for \textit{almost-prime} numbers instead of primes, where almost-prime indicates that the number has only a `few' prime factors.

Earlier work of the author \cite{Maynard} has shown that, assuming a generalization of the Elliott-Halberstam conjecture for numbers with at most 4 prime factors, there are infinitely many intervals of bounded length containing two primes and a number with at most 4 prime factors.

Pintz \cite{PintzII} has shown that Zhang's result can be extended to show that there are infintely many intervals of bounded length which contain two primes and a number with at most $O(1)$ prime factors. Pintz doesn't give an explicit bound on the number of prime factors for the almost-prime.

We extend this work to show that there are infinitely many intervals of bounded length which contain two primes and a number with at most 31 prime factors.
\section{Main result}
\begin{thrm}\label{thrm:MainTheorem}
There are infinitely many integers $n$ such that the interval $[n,n+10^8]$ contains two primes and a number with at most 31 prime factors.
\end{thrm}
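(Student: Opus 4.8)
The plan is to run the GPY method in the form pioneered by Goldston, Pintz and Y\i ld\i r\i m and refined by Zhang, supplemented with an extra term that suppresses integers having too many prime factors, following the strategy of Pintz \cite{PintzII}. Fix a large integer $k$ (to be chosen in terms of the admissible level of distribution) and an admissible $k$-tuple $\mathcal{H}=\{h_1,\dots,h_k\}$ with $h_k-h_1\le 10^8$, and single out one index, say $1$, so that the translate $n+h_1$ will be the candidate almost-prime. Carry out the usual $W$-trick: put $W=\prod_{p\le D_0}p$ with $D_0\to\infty$ slowly, choose $\nu_0\bmod W$ with $(\nu_0+h_i,W)=1$ for all $i$ (possible by admissibility), and restrict to $n\equiv\nu_0\pmod W$ with $x<n\le 2x$. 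Introduce Selberg-type weights $\lambda_d=\mu(d)F(\log d/\log R)$ for a smooth $F$ to be chosen, supported on squarefree $d$ coprime to $W$ and $x^{\delta}$-smooth with $d\le R=x^{\theta/2-\epsilon}$, and set $w(n)=\sum_{d\mid\prod_{i}(n+h_i)}\lambda_d$; the smoothness restriction on the support costs only a factor $1+o(1)$ in the main terms but is precisely what allows Zhang's level-of-distribution estimate to be used.

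The core of the argument is the asymptotic evaluation, as $x\to\infty$, of the three sums (all over $n\equiv\nu_0\pmod W$, $x<n\le2x$)
\[
S_0=\sum_n w(n)^2,\qquad S_1=\sum_n\Bigl(\sum_{i=2}^{k}\chi_{\mathbb{P}}(n+h_i)\Bigr)w(n)^2,\qquad S_2=\sum_n z^{\Omega(n+h_1)}w(n)^2,
\]
where $\chi_{\mathbb{P}}$ is the indicator of the primes, $\Omega$ counts prime factors with multiplicity, and $z>1$ is a free parameter. The evaluation of $S_0$ is the standard diagonalisation of the Selberg quadratic form, with main term a constant multiple of $\frac{x}{W}(\log R)^{k}\,\mathfrak{S}(\mathcal{H})$ times a quadratic functional of $F$. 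The evaluation of $S_1$ rests on the distribution of primes in arithmetic progressions to squarefree $x^{\delta}$-smooth moduli up to $x^{1/2+2\varpi}$, which is exactly the Motohashi--Pintz--Zhang estimate established by Zhang, invoked here as a black box. The sum $S_2$ is a sieve sum carrying the multiplicative weight $z^{\Omega(\cdot)}$ on the same linear form $n+h_1$ that also supports the sieve; expanding $w(n)^2$ and factoring $\prod_i(n+h_i)$ modulo each $[d_1,d_2]$ by the Chinese Remainder Theorem reduces it to sums of $z^{\Omega(m)}$ over progressions with modulus well inside the Bombieri--Vinogradov range, whose asymptotics are unconditional, and the outcome is a main term of the same order as $S_0$ multiplied by an explicit factor $G(z)$ arising from the perturbed local densities.

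Granting these, consider the combination
\[
S:=\sum_n\Bigl(\sum_{i=2}^{k}\chi_{\mathbb{P}}(n+h_i)-1-z^{-C}z^{\Omega(n+h_1)}\Bigr)w(n)^2=S_1-S_0-z^{-C}S_2 .
\]
If $S>0$, then for some $n$ in the range there are at least two primes among $n+h_2,\dots,n+h_k$, and, since $z^{\Omega(n+h_1)-C}\ge 1$ whenever $\Omega(n+h_1)>C$, also $\Omega(n+h_1)\le C$; the integer $n+h_1$ is distinct from those two primes and lies in $[n,n+10^8]$, so the theorem follows with $31$ replaced by $C$. Inserting the asymptotics turns $S>0$ into an explicit inequality relating $k$, $\varpi$, $z$, $C$ and the ratio of integrals attached to $F$; one then substitutes Zhang's permissible value of $\varpi$, optimises over the GPY parameter $\ell$, the shape $F$ (as in the Goldston--Pintz--Y\i ld\i r\i m/Soundararajan calculation) and the parameter $z$, and records that $C=31$ is admissible while a suitable admissible $k$-tuple of diameter at most $10^8$ exists (a finite verification).

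The main obstacle is twofold. Analytically, the genuinely new point beyond the two-primes problem is the handling of $S_2$: the weight $z^{\Omega}$ is unbounded and is attached to a linear form that also appears in the sieve, so one must carefully control the interaction between its prime factors and the truncated Selberg weights to extract the correct factor $G(z)$. Numerically, since Zhang's $\varpi$ is extremely small the margin in the GPY inequality is slim, so the joint optimisation over $\ell$, $F$ and $z$ has to be executed with enough precision to push $C$ down to a concrete small value such as $31$ rather than merely $O(1)$, and the admissible tuple underlying the length $10^8$ must then be exhibited explicitly.
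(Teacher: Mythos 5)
Your overall skeleton is the same as the paper's: a GPY-type sum over an admissible $k$-tuple with Zhang's smooth-supported sieve weights at level $x^{1/4+\varpi}$, in which the prime-detecting terms are offset by $-1$ minus a small multiple of a weight penalizing a designated form for having many prime factors; positivity then yields two primes plus an almost-prime in an interval whose length comes from an explicit admissible tuple. The genuine gap is in your treatment of the almost-prime term, which you yourself identify as the crux but then dispose of with an incorrect assertion. You attach the weight $z^{\Omega(n+h_1)}$ and claim that, after expanding $w(n)^2$, the inner sums are sums of $z^{\Omega(m)}$ over progressions with moduli ``well inside the Bombieri--Vinogradov range, whose asymptotics are unconditional.'' They are not: the moduli $[d_1,d_2]$ run up to $R^2\approx x^{1/2+2\varpi}$, which exceeds $x^{1/2}$, and in any case Bombieri--Vinogradov concerns primes; for a multiplicative weight $z^{\Omega}$ with general $z$ you would need genuine asymptotics in arithmetic progressions to individual moduli slightly beyond $x^{1/2}$, with error terms small enough to be summed against $|\lambda_{d_1}\lambda_{d_2}|$ (crude Shiu-type upper bounds after taking absolute values are useless here, since the main term depends on cancellation among the $\lambda_d$), and no such result is quoted or proved. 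This is precisely the point the paper is engineered around: it restricts to $n$ with $\Pi(n)$ squarefree (showing the discarded non-squarefree contribution $S'$ is negligible), so that $\tau(L_k(n))=2^{\Omega(L_k(n))}$ and the cutoff $B=2^{32}-1$ gives at most $31$ prime factors, and it uses the divisor function rather than $z^{\Omega}$ because $\tau=1\ast 1$ has level of distribution $2/3$ for individual moduli (Heath-Brown's argument, which only needs $D<N^{1/3-\epsilon}$), and because the resulting divisor-weighted sieve sum was already evaluated by Ho and Tsang; the passage from squarefree support to Zhang's smooth support is then controlled by the $\kappa_3$-type error estimates of the lemmas bounding $|M_i-M_i^*|$.

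So your proposal has the right architecture and correctly isolates the interaction between the unbounded weight on $n+h_1$ and the truncated Selberg weights dividing the same form, but it does not supply the ingredient that makes that step work. If you specialize to $z=2$, your weight agrees with $\tau$ on squarefree values (and $2^{\Omega}=\tau\ast h$ with $h$ supported on squarefull numbers), so the argument can be repaired exactly along the paper's lines; for general $z$ the required equidistribution beyond level $1/2$ is not available off the shelf. The remaining numerical steps you defer (choosing $k\approx 4.5\times 10^6$ and the GPY parameter, fixing the prime-factor cutoff, and exhibiting an admissible $k$-tuple of diameter at most $10^8$ from the primes in $(4.5\times 10^6,10^8]$) match the paper's plan and are fine in outline.
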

Our result is naturally based heavily on the work of Zhang \cite{Zhang}, and on the GPY method. We follow a similar method to the author's earlier paper \cite{Maynard}, but to simplify the argument we detect numbers with at most $r$ prime factors by using terms weighted by the divisor function. To estimate these terms we rely on earlier work of Ho and Tsang \cite{HoTsang}.
\section{Proof of Theorem \ref{thrm:MainTheorem}}
We let $L^{(1)}_i(n)=n+h_i$ ($1\le i\le k$) be distinct linear functions with integer coefficients. Moreover, we assume that the product function $\Pi^{(1)}(n)=\prod_{i=1}^{k} L_i(n)$ has no fixed prime divisor. We adopt a normalization of our functions, due to Heath-Brown \cite{HB}. We let $L_i(n)=L_i^{(1)}(An+a_0)=An+b_i$ where the constants $A,a_0>0$ are chosen such that for all primes $p$ we have
\begin{equation}\#\{1\le a\le p:\prod_{i=1}^{k}L_i(n)\equiv 0\pmod{p}\}=\begin{cases}k,\qquad&p\nmid A,\\ 0,&p|A.\end{cases}\end{equation}
We now set $\Pi(n)=\prod_{i=1}^{k}L_i(n)$.

We consider the sum
\begin{align}
S=S(B)=\sum_{\substack{N\le n<2N\\ \Pi(n)\text{ square-free}\\ n\equiv a\pmod{A}}}\Biggl(\sum_{i=1}^{k-1}\theta(L_i(n))-1-\frac{\tau(L_k(n))}{B}\Biggr)\Biggl(\sum_{d|\Pi(n)}\lambda_d\Biggr)^2,
\end{align}
where $\tau$ denotes the divisor function, $\theta$ is the function defined by
\begin{align}
\theta(n)&=\begin{cases}1,\qquad &n\text{ is prime},\\ 0,&\text{otherwise,}\end{cases}
\end{align}
and the $\lambda_d$ are real constants (to be chosen later).

We wish to show, for a suitable choice of positive constants $B$ and $k$, that $S>0$ for any large $N$. If $S>0$ for some $N$, then at least one term in the sum over $n$ must have a strictly positive contribution. Since the $\lambda_d$ are all reals, we see that if there is a positive contribution from $n\in[N,2N)$, then one of the following must hold.
\begin{enumerate}
\item At least three of the $(L_i(n))_{i=1}^{k-1}$ are primes.
\item At least two of the $(L_i(n))_{i=1}^{k-1}$ are primes, and $\tau(L_k(n))<B$.
\end{enumerate}
Therefore, in either case we must have at least two of the $(L_i(n))_{i=1}^{k-1}$ prime and one other integer with at most $\lfloor\log_2{B}\rfloor$ prime factors. Since this holds for all large $N$, we see there must be infinitely many integers $n$ such that two of the $L_i^{(1)}(n)$ are prime and one other of the $L_i^{(1)}(n)$ has at most $\lfloor\log_2{B}\rfloor$ prime factors.

We first remove the condition that $\Pi(n)$ be square-free in the sum over $n$, and then we split $S$ up into separate terms which we will estimate individually.
\begin{align}
S&\ge \sum_{N\le n<2N}\Biggl(\sum_{i=1}^{k-1}\theta(L_i(n))-1-\frac{\tau(L_k(n))}{B}\Biggl)\Biggl(\sum_{d|\Pi(n}\lambda_d\Biggr)^2-kS'\nonumber\\
&=-S_1+\sum_{i=1}^{k-1}S_2(L_i)-\frac{1}{B}S_3(p)-kS',\label{eq:SExpansion}
\end{align}
where
\begin{align}
S'&=\sum_{\substack{N\le n< 2N\\ \Pi\text{ not square-free}}}\Bigl(\sum_{d|\Pi(n)}\lambda_d\Bigr)^2,\\
S_1&=\sum_{\substack{N\le n <2N\\ n\equiv a\pmod{A}}}\Bigl(\sum_{d|\Pi(n)}\lambda_d\Bigr)^2,\\
S_2(L_i)&=\sum_{\substack{N\le n <2N\\ n\equiv a\pmod{A}}}\theta(L_i(n))\Bigl(\sum_{d|\Pi(n)}\lambda_d\Bigr)^2,\\
S_3&=\sum_{\substack{N\le n <2N\\ n\equiv a\pmod{A}}}\tau(L_k(n))\Bigl(\sum_{d|\Pi(n)}\lambda_d\Bigr)^2.
\end{align}
We will use the following proposition to estimate the terms above.
\begin{prpstn}\label{prpstn:MainProp}
Let $\varpi=1/1168$ and $D=N^{1/4+\varpi}/A$. Let $D_1=N^\varpi/A$ and $\mathcal{P}=\prod_{p\le D_1}p$. For $d<D$ with $d|\mathcal{P}$ we let
\[\lambda_d=\frac{\mu(d)}{(k+l)!}\left(\log\frac{D}{d}\right)^{k+l},\]
and let $\lambda_d=0$ otherwise. Then we have
\begin{align*}
S'&=o(N(\log{N})^{k+2l}),\\
S_1&\le \frac{\mathfrak{S}N(\log{D})^{k+2l}}{(k+2l)!}\binom{2l}{l}\left(1+\kappa_1+o(1)\right)),\\
S_2(L_i)&\ge \frac{\mathfrak{S}N(\log{D})^{k+2l+1}}{(k+2l+1)!\log{N}}\binom{2l+2}{l+1}\left(1-\kappa_2+o(1)\right),\\
S_3&\le \frac{\mathfrak{S}N(\log{D})^{k+2l}}{(k+2l-1)!}\binom{2l-2}{l-1}\left(\frac{6l-4}{l(k+2l)}+\frac{\log{N}}{\log{D}}+\kappa_3\left(6\varpi+\frac{\log{N}}{\log{D}}\right)+o(1)\right).
\end{align*}
where
{\allowdisplaybreaks
\begin{align*}
\kappa_1&=\delta_1(1+\delta_2^2+(\log{293})k)\binom{k+2l}{k},\\
\kappa_2&=\delta_1(1+\delta_2^2+(\log{293})k)\binom{k+2l+1}{k-1},\\
\kappa_3&=\delta_1(1+\delta_2^2+(\log{293})(k+1))\binom{k+2l-1}{k+1},\\
\delta_1&=(1+4\varpi)^{-k},\\
\delta_2&=1+\sum_{\nu=1}^{293}\frac{(\log{293})k^\nu}{\nu!},\\
\mathfrak{S}&=\prod_{p|A}\left(1-\frac{1}{p}\right)^{-k}\prod_{p\nmid A}\left(1-\frac{k}{p}\right)\left(1-\frac{1}{p}\right)^{-k}.
\end{align*}}
\end{prpstn}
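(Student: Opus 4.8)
The plan is to handle $S'$ by a crude estimate and each of $S_1$, $S_2(L_i)$, $S_3$ by opening the square, $\bigl(\sum_{d\mid\Pi(n)}\lambda_d\bigr)^2=\sum_{d_1,d_2}\lambda_{d_1}\lambda_{d_2}\sum_{[d_1,d_2]\mid\Pi(n)}1$, and interchanging the order of summation. Because $\lambda_d$ is supported on squarefree $d$ coprime to $A$, the conditions $[d_1,d_2]\mid\Pi(n)$ and $n\equiv a\pmod A$ combine via the Chinese Remainder Theorem into a congruence modulo $A[d_1,d_2]$, so each inner sum becomes an arithmetic sum of $1$, of $\theta(L_i(n))$, or of $\tau(L_k(n))$ over $n$ in a union of residue classes to that modulus. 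For $S_1$ this is a counting problem: the inner sum equals $\tfrac{N}{A}\cdot\tfrac{\rho([d_1,d_2])}{[d_1,d_2]}+O(\rho([d_1,d_2]))$, where $\rho$ is the multiplicative function with $\rho(p)=k$ supplied by the normalisation, and the accumulated error is $O(D^{2+o(1)})=O(N^{1/2+2\varpi+o(1)})=o(N)$. For $S_2(L_i)$ I would invoke an asymptotic for primes in progressions to the moduli $A[d_1,d_2]$, which reach $AD^2=N^{1/2+2\varpi}=N^{1/2+1/584}$; since these moduli are squarefree and $D_1$-smooth (this being the point of restricting $\lambda_d$ to $d\mid\mathcal P$), this is exactly the range covered by Zhang's theorem \cite{Zhang}, whose error terms are $o(N)$ after summing over $d_1,d_2$. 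For $S_3$ I would use an asymptotic for $\sum_{n\le x,\,n\equiv b\,(q)}\tau(n)$ uniform for $q\le N^{1/2+1/584}\ll N^{2/3}$; this is provided by Ho and Tsang \cite{HoTsang}, with main term $\tfrac{x}{q}P_q(\log x)$ for a linear polynomial $P_q$ whose coefficients depend multiplicatively on $q$, plus an admissible error.

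For $S'$ I would use that $\Pi(n)$ not squarefree forces $p^2\mid L_i(n)L_j(n)$ for some prime $p$ and some $1\le i\le j\le k$. For $p$ between a fixed power of $\log N$ and $N^{1/3}$, the number of $N\le n<2N$ with $p^2\mid L_i(n)L_j(n)$ is $O(N/p^2+1)$; for smaller $p$ one uses that $d\mid\mathcal P$ is squarefree and $|\lambda_d|\le(\log D)^{k+l}/(k+l)!$, so $\bigl(\sum_{d\mid\Pi(n)}\lambda_d\bigr)^2\ll N^{o(1)}(\log N)^{2(k+l)}$, together with $\tau(\Pi(n))\ll_\varepsilon N^\varepsilon$. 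Summing over $p$ then gives $S'=o\bigl(N(\log N)^{k+2l}\bigr)$.

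For the main terms, in all three cases I am reduced to $\sum_{d_1,d_2}\lambda_{d_1}\lambda_{d_2}\,g([d_1,d_2])$ for a suitable multiplicative $g$ (built from $\rho$ for $S_1$; from the density of primes in progressions for $S_2$; from the Ho--Tsang density for $S_3$), with the local factors assembling into the singular series $\mathfrak S$. I would then apply the standard Selberg diagonalisation, passing to variables $y_r\propto\mu(r)(\cdots)\sum_{r\mid d}\lambda_d$, which collapses each sum to a single sum $\sum_r\mu^2(r)(\cdots)\bigl(\log\tfrac{D}{r}\bigr)^m$ with $m\in\{k+2l-2,\,k+2l,\,k+2l+1,\,k+2l+2\}$; evaluating these by a standard contour-integration lemma (cf. \cite{GPYI}) produces Beta-function values that become the binomial coefficients $\binom{2l}{l}$, $\binom{2l+2}{l+1}$, $\binom{2l-2}{l-1}$. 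In $S_3$ the extra factor $\tfrac{6l-4}{l(k+2l)}+\tfrac{\log N}{\log D}$ arises because $P_q$ has degree one: its $\log x$-coefficient gives the $\tfrac{\log N}{\log D}$ piece, while its constant term, whose $q$-dependence feeds an additional derivative through the diagonalisation, gives the $\tfrac{6l-4}{l(k+2l)}$ piece. Finally I would deal with the restriction $d\mid\mathcal P$ by first evaluating the \emph{unrestricted} sums over all squarefree $d<D$, which yield exactly the displayed expressions without the $\kappa_i$, and then bounding the discarded contribution of $d$ divisible by a prime $>D_1$ by Rankin's trick; distributing this loss over the $O\bigl(\binom{k+2l}{k}\bigr)$-many terms generated by the diagonalisation, with each prime-power factor costing at most $\log 293$ and the overall scale $\delta_1=(1+4\varpi)^{-k}$ coming from $(\log D_1/\log D)^k$, produces precisely $\kappa_1,\kappa_2,\kappa_3$ --- giving upper bounds for $S_1,S_3$ and a lower bound for $S_2(L_i)$.

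The hard part is the $S_3$ estimate, which is the one genuinely new ingredient beyond \cite{Zhang} and \cite{Maynard}. Two things require care. First, one must feed the divisor function into the sieve: the Ho--Tsang main-term polynomial has coefficients depending on the (highly composite, smooth) modulus $A[d_1,d_2]$, and faithfully carrying this modulus-dependence through the diagonalisation is what produces the unfamiliar factor $\tfrac{6l-4}{l(k+2l)}+\tfrac{\log N}{\log D}$ and the $6\varpi$ appearing inside $\kappa_3$. Second, the Ho--Tsang error term must survive summation over all $d_1,d_2<D$; this is why $\varpi$ is taken as small as $1/1168$, keeping $A[d_1,d_2]<N^{1/2+1/584}$ safely inside the range of uniformity. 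Everything else --- the counting in $S_1$, the crude bound for $S'$, the contour integrals --- is routine GPY-style computation, and the handling of the error terms in $S_2(L_i)$ is Zhang's theorem applied to the smooth moduli $[d_1,d_2]$, which is exactly why the weights $\lambda_d$ must be supported on $D_1$-smooth $d$ and why the $\kappa$-corrections cannot be avoided.
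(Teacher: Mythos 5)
Your proposal follows essentially the same route as the paper: open the squares, invoke Zhang's theorem for the prime-counting sums $S_1$, $S_2(L_i)$ over the smooth moduli $[d_1,d_2]<D^2$, feed the Heath-Brown/Ho--Tsang asymptotic for $\tau$ in progressions (valid since $D^2<N^{2/3-\epsilon}$) into the diagonalisation to get $S_3$ with its degree-one polynomial producing the $\tfrac{6l-4}{l(k+2l)}+\tfrac{\log N}{\log D}$ factor, and control the passage from smooth to unrestricted $d$ by Zhang's \S4-style comparison to obtain the $\kappa_i$. The only loose point is your attribution of $\delta_1=(1+4\varpi)^{-k}$ to $(\log D_1/\log D)^k$ --- it actually arises from the truncation at $x^{1/4}=D^{1/(1+4\varpi)}$ in Zhang's decomposition --- but this does not affect the strategy, which is the paper's.
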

We can now establish our main theorem using Proposition \ref{prpstn:MainProp}. Substituting the bounds into \eqref{eq:SExpansion} we obtain
\begin{equation}S\ge \frac{N\mathfrak{S}(\log{D})^{k+2l}}{(k+2l)!}\binom{2l}{l}\left(\frac{(k-1)(2l+1)(1+4\varpi)(1-\kappa_2)}{(k+2l+1)(2l+2)}-1-\kappa_1-\frac{c_0}{B}+o(1)\right),\end{equation}
where
\begin{equation}c_0=\frac{l(k+2l)}{4l-2}\left(\frac{6l-4}{l(k+2l)}+\frac{\log{N}}{\log{D}}+\kappa_3\left(6\varpi+\frac{\log{N}}{\log{D}}\right)\right)+o(1).\end{equation}
We now choose $k=4.5\times 10^6$, $l=300$. By a simple computation analogous to that giving \cite[inequality (4.21)]{Zhang} we certainly have
\begin{equation}
\kappa_1,\kappa_2,\kappa_3\le \exp(-1000).
\end{equation}
Thus, by computation, we see that for $N$ sufficiently large we have
\begin{align}
\frac{(k-1)(2l+1)(1+4\varpi)(1-\kappa_2)}{(k+2l+1)(2l+2)}-1-\kappa_1&\ge \left(1-\frac{1}{600}-\frac{602}{4500000}\right)\frac{1172}{1168}-1-3e^{-100}\nonumber\\
&\ge 0.0016\end{align}
and
\begin{equation}c_0\le k+2l+2\le 460000.\end{equation}
We now choose $B=2^{32}-1\ge 4000000000$, and we see that
\begin{align}
S&\ge \frac{N\mathfrak{S}(\log{D})^{k}}{(k+2l)!}\binom{2l}{l}\left(0.0016-\frac{4600000}{4000000000}\right)\nonumber\\
&\ge 0.00045\frac{N\mathfrak{S}(\log{D})^{k}}{(k+2l)!}\binom{2l}{l}.\end{align}
Thus for any admissible $k$-tuple of linear functions has infinitely many integers $n$ for which two of the functions are prime and $n$, and another function has at most 31 prime factors. A computation now reveals that
\begin{equation}
\pi(10^8)-\pi(4.5\times10^6)\ge 4.5\times 10^6.
\end{equation}
Therefore we can form an admissible $k$-tuple of linear functions of the form $L_i(n)=n+h_i$ with $k=4.5\times 10^6$ and $0\le h_i\le 10^8$, by letting $h_i=p_{m+i}-p_{m+1}$ where $p_m$ is the largest prime smaller than $4.5\times 10^6$. This shows that there are infinitely many intervals of length at most $10^8$ which contain two primes and a number with at most 31 prime factors.

We comment here that with slightly more care one can take $\kappa_1$, $\kappa_2$, and $\kappa_3$ to be rather smaller than the expressions given in Proposition \ref{prpstn:MainProp}. This allows us to show that $S>0$ for smaller values of $k$, which in turn allows us to reduce the number of prime factors required for the almost-prime from 31 to 29. Moreover, any improvement in the constant $\varpi$ occurring in Zhang's paper would give a corresponding improvement here. By choosing $k$ and $l$ optimally, we would have that there are infinitely intervals of bounded length containing two primes and one almost-prime with $\approx 3\log_2{\frac{3}{4\varpi}}$ prime factors.
\section{Lemmas}
The proof of the bounds for the sums $S'$, $S_1$ and $S_2$ essentially already exists in the literature. Ho and Tsang \cite{HoTsang} evaluate a sum very similar to $S_3$, but in their case the $\lambda_d$ are non-zero on square-free $d<D$ for which $d|\mathcal{P}$. We therefore require some estimates to show that the error in replacing our sieve weights by the ones used by Ho and Tsang is small, analogously to \cite[Sections \S4 and \S5]{Zhang}. Our work naturally relies heavily on the papers \cite{Zhang} and \cite{HoTsang}, and is far from self-contained. 

We recall the definitions of $D,D_1,\mathcal{P},\varpi,\lambda_d$ and $\mathfrak{S}$ from Proposition \ref{prpstn:MainProp}. As in \cite{Zhang}, we also define the quantity $D_0=(\log{D})^{1/k}$.
\begin{lmm}\label{lmm:DivisorBasic}
Let $\varrho_3$ be the multiplicative function supported on square-free integers coprime to $A$ satisfying $\varrho_3(p)=k+1-k^2/p$ for $p\nmid A$. Then
\[\sum_{N\le n<2N}\tau(L_k(n))\left(\sum_{d|\Pi(n)}\lambda_d\right)^2=\frac{N\phi(A)}{A}\left((\log{N}+O(1))M_1-2M_2+M_3\right)+o(N(\log{N})^{k+2l}),\]
where
\begin{align*}
M_1&=\sum_{d,e|\mathcal{P}}\frac{\lambda_d\lambda_e\varrho_3([d,e])}{[d,e]},\\
M_2&=\sum_{p\nmid A}\frac{2(p-k)\log{p}}{(k+1)p-k}\sum_{\substack{d,e|\mathcal{P}\\p|d}}\frac{\lambda_d\lambda_e\varrho_3([d,e])}{[d,e]},\\
M_3&=\sum_{p\nmid A}\frac{2(p-k)\log{p}}{(k+1)p-k}\sum_{\substack{d,e|\mathcal{P}\\p|d,e}}\frac{\lambda_d\lambda_e\varrho_3([d,e])}{[d,e]}.
\end{align*}
\end{lmm}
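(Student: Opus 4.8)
The plan is to open the square, interchange the order of summation, and reduce the whole sum to the evaluation, for each modulus $q=[d,e]$, of the divisor sum
\[T(q)=\sum_{\substack{N\le n<2N\\ q\mid\Pi(n)}}\tau(L_k(n)).\]
Since each $\lambda_d$ is supported on square-free $d\mid\mathcal P$, for square-free $d,e$ the conditions $d\mid\Pi(n)$ and $e\mid\Pi(n)$ hold simultaneously exactly when $[d,e]\mid\Pi(n)$, so
\[\sum_{N\le n<2N}\tau(L_k(n))\Bigl(\sum_{d\mid\Pi(n)}\lambda_d\Bigr)^{2}=\sum_{d,e\mid\mathcal P}\lambda_d\lambda_e\,T([d,e]).\]
By the Heath-Brown normalization, $p\mid A$ forces $p\nmid\Pi(n)$ for all $n$, so $T(q)=0$ unless $(q,A)=1$, which is why $\varrho_3$ may be taken to be supported on integers coprime to $A$.

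First I would evaluate $T(q)$ for square-free $q\mid\mathcal P$ with $(q,A)=1$. The congruence $q\mid\Pi(n)$ decomposes, by the Chinese Remainder Theorem, into $\prod_{p\mid q}k$ residue classes modulo $q$; of the $k$ residues modulo each $p\mid q$ at which $p\mid\Pi(n)$, exactly one has $p\mid L_k(n)$ and on the other $k-1$ we have $p\nmid L_k(n)$ (the roots are distinct by the normalization). On any such class $L_k(n)=An+b_k$ runs over an arithmetic progression, and I would insert the standard asymptotic for the divisor function in arithmetic progressions, obtained from Dirichlet's hyperbola method, which supplies both the leading coefficient and a secondary constant term controlled by the logarithm of the modulus. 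Multiplying the local contributions over $p\mid q$ collects the leading coefficient into a multiplicative function of $q$ --- this is the role of $\varrho_3$ --- while the prefactor $\phi(A)/A$ records the density of $\tau(An+b_k)$ among residue classes modulo $A$; the secondary terms, organised according to whether a prime $p\mid q$ divides one or both of $d,e$ and to which half of the hyperbola it arises from, are exactly what produces the two corrections $-2M_2$ and $M_3$. One checks directly, using the symmetry $d\leftrightarrow e$, that
\[-2M_2+M_3=-\sum_{d,e\mid\mathcal P}\frac{\lambda_d\lambda_e\varrho_3([d,e])}{[d,e]}\sum_{p\mid[d,e]}\frac{2(p-k)\log p}{(k+1)p-k},\]
so these $\log p$-weighted sums are precisely what the $-2\log q$ in the divisor estimate produces. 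This local-to-global computation is essentially that of Ho and Tsang \cite{HoTsang}, which I would follow, tracking the difference in normalization and the fact that $L_k$ is singled out among the $L_i$.

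Summing over $d,e\mid\mathcal P$, the main terms assemble into $\frac{N\phi(A)}{A}\bigl((\log N+O(1))M_1-2M_2+M_3\bigr)$, and it remains to bound the accumulated error by $o(N(\log N)^{k+2l})$. This is where the real work lies: the moduli $[d,e]$ range up to $D^{2}=N^{1/2+2\varpi}$, and since $|\lambda_d|\ll(\log D)^{k+l}$ one needs $\tau(An+b_k)$ to be well distributed, on average over such $D_1$-smooth moduli $q$ in this range, along the progressions counted by $q\mid\Pi(n)$, together with the estimate needed to pass from the sieve weights of Ho and Tsang to ours. Since $\tfrac12+2\varpi$ lies comfortably below the level of distribution available for the divisor function, this is within reach of existing methods, and for the smooth moduli at hand it can be carried out by adapting the dispersion-type arguments of \cite[\S4--\S5]{Zhang}. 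I expect this error analysis --- keeping the divisor-in-progressions error and the weight-replacement error simultaneously under control, uniformly over the whole range --- to be the main obstacle, together with the delicate bookkeeping needed to pin down the constants $\frac{2(p-k)\log p}{(k+1)p-k}$ in the secondary terms; the extraction of the main term itself should be a routine, if lengthy, adaptation of \cite{HoTsang}.
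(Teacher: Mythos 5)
Your main-term computation is in substance the argument the paper appeals to: the paper proves this lemma simply by citing Heath-Brown's evaluation (pages 254--255 of \cite{HB}) of exactly this divisor-twisted sieve sum, with only notational changes, and your opening of the square, reduction to $T([d,e])$, use of the hyperbola method for $\tau$ in progressions, and the bookkeeping identity $-2M_2+M_3=-\sum_{d,e}\frac{\lambda_d\lambda_e\varrho_3([d,e])}{[d,e]}\sum_{p\mid[d,e]}\frac{2(p-k)\log p}{(k+1)p-k}$ match that computation.

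Where you go astray is in what you call ``the main obstacle.'' No distribution result on average over moduli, and certainly nothing of dispersion type from Zhang, is needed here. The whole point --- and the one hypothesis the paper explicitly checks --- is that the $\lambda_d$ are supported on $d<D=N^{1/4+\varpi}/A<N^{1/3-\epsilon}$, so every modulus $[d,e]<D^2<N^{2/3-\epsilon}$ lies in the range where the divisor function admits an asymptotic in each \emph{individual} arithmetic progression with a power-saving error term (the classical estimate via Weil's bound for Kloosterman sums). One then sums this error trivially over the at most $D^2$ pairs $(d,e)$, absorbing the factor $\max|\lambda_d|^2\ll(\log D)^{2k+2l}$, and obtains a power saving; the stated $o(N(\log N)^{k+2l})$ merely reflects that these $\lambda_d$ are larger than Heath-Brown's by the factor $(\log D)^{k+l}/(k+l)!$. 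Zhang's \S4--\S5 concern the comparison of truncated sieve weights, not the distribution of $\tau$, and the ``weight-replacement'' step you fold into this lemma belongs to the later Lemmas \ref{lmm:EasyBound}--\ref{lmm:MiSizes}, not here: the present lemma is stated directly for the weights $\lambda_d$ supported on $d\mid\mathcal{P}$, $d<D$, and requires no such comparison. So the analytic input you leave open as the hard part is in fact the routine part, provided you invoke the individual-modulus level $2/3$ for $\tau$ rather than any averaged estimate.
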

\begin{proof}
This follows from the argument of \cite[Pages 254-255]{HB}, with changes only to the notation. We note that the $\lambda_d$ are supported on $d<D<N^{1/3-\epsilon}$, as required for the argument. The error term is larger since our $\lambda_d$ are large by a factor $(\log{D})^{k+l}/(k+l)!$.
\end{proof}
\begin{lmm}\label{lmm:Rho3Estimates}
Let $\varrho_3$ be as defined in Lemma \ref{lmm:DivisorBasic}, and let
\begin{align*}
g(y)&=\begin{cases}\frac{1}{(k+l)!}\left(\log{\frac{D}{y}}\right)^{k+l},\qquad &y<D,\\ 0,&\text{otherwise,}\end{cases}\\
\mathcal{A}_3(d)&=\sum_{(r,d)=1}\frac{\mu(r)\varrho_3(r)}{r}g(dr),\\
\theta_3(d)&=\prod_{p|d}\left(1-\frac{\varrho_3(p)}{p}\right)^{-1}.
\end{align*}
Then if $d<D$ is square-free we have
\begin{align*}
\mathcal{A}_3(d)&=\frac{\theta_3(d)}{(l-1)!}\mathfrak{S}\frac{A}{\phi(A)}\left(\log{\frac{D}{d}}\right)^{l-1}+O\left((\log{D})^{l-2+\epsilon}\right),\\
\sum_{d\le x^{1/4}}\frac{\varrho_3(d)\theta_3(d)}{d}&=\frac{(1+4\varpi)^{-k-1}}{(k+1)!}\mathfrak{S}^{-1}\frac{\phi(A)}{A}(\log{D})^{k+1}+O((\log{D})^{k-1})
\end{align*}
\end{lmm}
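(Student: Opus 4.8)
The plan is to obtain both asymptotics by contour integration, the standard route for local sums of Selberg-sieve type; compare the analogous computations in \cite{GPYI}, \cite{Zhang} and \cite{HoTsang}.

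For the first estimate, I would write $g$ as a Mellin integral. Since
\[\frac{1}{2\pi i}\int_{(c)}\frac{y^{s}}{s^{m+1}}\,ds=\begin{cases}\frac{1}{m!}(\log y)^{m},&y\ge 1,\\ 0,&0<y<1,\end{cases}\]
for any $c>0$ we have $g(dr)=\frac{1}{2\pi i}\int_{(c)}(D/dr)^{s}s^{-(k+l+1)}\,ds$, so interchanging with the sum over $r$ (absolutely convergent for $\Re s>0$) gives
\[\mathcal{A}_3(d)=\frac{1}{2\pi i}\int_{(c)}\left(\frac{D}{d}\right)^{s}\frac{Z_d(s)}{s^{k+l+1}}\,ds,\qquad Z_d(s)=\prod_{p\nmid dA}\left(1-\frac{\varrho_3(p)}{p^{1+s}}\right).\]
Because $\varrho_3(p)=k+1+O(1/p)$, comparison with $\zeta$ yields the factorisation
\[Z_d(s)=\zeta(1+s)^{-(k+1)}\,F(s)\left(\prod_{p\mid d}\left(1-\frac{\varrho_3(p)}{p^{1+s}}\right)\right)^{-1},\]
where $F$ is independent of $d$ and is holomorphic and bounded on $\Re s\ge-1/2$, and the finite product carries all the $d$-dependence, its value at $s=0$ being $\theta_3(d)$. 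Since $\zeta(1+s)^{-(k+1)}$ has a zero of order $k+1$ at $s=0$, the integrand has a pole there of order exactly $l$ and no other singularity in a region $\Re s\ge-c/\log(|\Im s|+2)$ cut out by the classical zero-free region for $\zeta$. Moving the contour to the boundary of this region, the residue at $s=0$ produces the main term, whose leading part is $(l-1)!^{-1}(\log(D/d))^{l-1}\theta_3(d)F(0)$; a direct evaluation of the Euler product $F(0)$, using $\varrho_3(p)=k+1-k^2/p$ for $p\nmid A$ and the definition of $\mathfrak{S}$, identifies the constant as $\mathfrak{S}A/\phi(A)$, giving the stated main term.

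The error is the integral along the shifted contour: there the $\zeta$-factor is $O((\log(|\Im s|+2))^{k+1})$ by standard bounds near the $1$-line, $F$ is bounded, $(D/d)^{s}$ supplies decay, and one must control the $d$-dependent factor $\prod_{p\mid d}(1-\varrho_3(p)p^{-1-s})^{-1}$ uniformly to conclude the bound $O((\log D)^{l-2+\epsilon})$. I expect the main obstacle to be exactly this uniformity in $d$, which ranges over $d<D=N^{1/4+\varpi}$, a fixed power of $N$: one has to isolate the $p\mid d$ Euler factor explicitly and keep the contour inside the zero-free region with care, along the lines of the analogous arguments in \cite[\S4 and \S5]{Zhang}. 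Everything else is routine.

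The second estimate follows from the same machinery, now with no external variable so that uniformity is not in question. Set $W(s)=\sum_{d}\varrho_3(d)\theta_3(d)d^{-1-s}=\prod_{p\nmid A}(1+\varrho_3(p)\theta_3(p)p^{-1-s})$; the identity $1+\varrho_3(p)\theta_3(p)/p=(1-\varrho_3(p)/p)^{-1}$ and comparison with $\zeta$ give $W(s)=\zeta(1+s)^{k+1}H(s)$ with $H$ holomorphic near $s=0$, so
\[\sum_{d\le y}\frac{\varrho_3(d)\theta_3(d)}{d}=\frac{1}{2\pi i}\int_{(c)}y^{s}\,\frac{W(s)}{s}\,ds\]
has a pole of order $k+1$ at $s=0$; shifting the contour into the zero-free region gives $(k+1)!^{-1}H(0)(\log y)^{k+1}+O((\log y)^{k})$. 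Taking $y=x^{1/4}$ and using $\log(x^{1/4})=\tfrac14\log N+O(1)=(1+4\varpi)^{-1}\log D+O(1)$ (from the definitions of $D$ and $\varpi$) turns this into $(1+4\varpi)^{-(k+1)}(k+1)!^{-1}H(0)(\log D)^{k+1}$ with the stated error, and a final Euler-product computation identifies $H(0)=\mathfrak{S}^{-1}\phi(A)/A$.
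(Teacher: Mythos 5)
Your outline is essentially the paper's own proof: the paper simply invokes Zhang's Lemmas 3 and 4, whose proofs are exactly this Mellin representation of $g$, the factorisation of the Dirichlet series by $\zeta(1+s)^{\pm(k+1)}$ with the $d$-dependence isolated in the finite Euler product over $p\mid d$, and a contour shift into the classical zero-free region, here with $\varrho_3$, $k+1$, $l-1$ in place of $\varrho_1$, $k_0$, $l_0$. The uniformity in $d$ that you flag is handled in precisely that cited argument, so your proposal matches the paper's route and is correct at the same level of detail.
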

\begin{proof}
The proof is entirely analogous to that of \cite[Lemmas 3 and 4]{Zhang}, the only difference being we have $\varrho_3$, $\mathfrak{S}A/\phi(A)$, $k+1$ and $l-1$ in place of $\varrho_1$, $\mathfrak{S}$, $k_0$ and $l_0$ in the argument.
\end{proof}
\begin{lmm}\label{lmm:EasyBound}
Let
\[M_1^*=\sum_{d,e}\frac{\lambda_d\lambda_e\varrho_3([d,e])}{[d,e]}.\]
Then we have that
\[|M_1-M_1^*|\le\kappa_3\frac{A}{\phi(A)}\binom{2l-2}{l-1}\frac{\mathfrak{S}(\log{D})^{k+2l-1}}{(k+2l-1)!}(1+o(1)),\]
where
\begin{align*}
\kappa_3&=\delta_1^{(3)}(1+(\delta_2^{(3)})^2+(\log{293})(k+1))\binom{k+2l-1}{k+1},\\
\delta_1&=(1+4\varpi)^{-k},\\
\delta_2&=1+\sum_{\nu=1}^{293}\frac{(\log{293})k^\nu}{\nu!}.
\end{align*}
\end{lmm}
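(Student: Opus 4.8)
Recall that $M_1$ carries the restriction $d,e\mid\mathcal P$ inherited from the support of the sieve weights, while $M_1^*$ is the same quadratic form with that restriction dropped, the weights $\lambda_d=\mu(d)g(d)$ being taken over all square-free $d<D$; thus $M_1-M_1^*$ is supported on pairs $(d,e)$ of square-free integers below $D$ of which at least one fails to divide $\mathcal P$, i.e.\ has a prime factor exceeding $D_1$. The plan is to bound this directly, in the manner of \cite[\S4--\S5]{Zhang}. The first step is to run the multiplicative manipulation that produces the main term of $M_1^*$ (as in \cite{HoTsang}): the identity $n/\varrho_3(n)=\sum_{m\mid n}h(m)$, with $h$ multiplicative and $1+h(p)=p/\varrho_3(p)$ --- legitimate because $\varrho_3(p)=k+1-k^2/p$ vanishes for no prime, $k^2/(k+1)$ never being an integer --- collapses the form to $\sum_m\frac{\varrho_3(m)}{m\,\theta_3(m)}\mathcal A_3(m)^2$ with $\mathcal A_3,\theta_3$ as in Lemma \ref{lmm:Rho3Estimates}, and $M_1$ into the same sum restricted to $m\mid\mathcal P$ and with $\mathcal A_3(m)$ replaced by its $D_1$-smooth truncation $\widetilde{\mathcal A}_3(m)=\sum_{(r,m)=1,\ r\mid\mathcal P}\mu(r)\varrho_3(r)g(mr)/r$. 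Hence
\[
M_1^*-M_1=\sum_{m\nmid\mathcal P}\frac{\varrho_3(m)}{m\,\theta_3(m)}\mathcal A_3(m)^2+\sum_{m\mid\mathcal P}\frac{\varrho_3(m)}{m\,\theta_3(m)}\bigl(\mathcal A_3(m)-\widetilde{\mathcal A}_3(m)\bigr)\bigl(\mathcal A_3(m)+\widetilde{\mathcal A}_3(m)\bigr),
\]
and in either term the summation runs over integers --- $m$ itself, or the variable $r$ inside $\mathcal A_3-\widetilde{\mathcal A}_3$ --- divisible by a prime exceeding $D_1$.

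The crux is that a square-free integer below $D$ has at most $293$ prime factors exceeding $D_1$: with $\varpi=1/1168$ we have $D=N^{293/1168}/A$ and $D_1=N^{1/1168}/A$, so $\log D/\log D_1\to 1+1/(4\varpi)=293$, and a product of $294$ primes each above $D_1$ exceeds $D$ for all large $N$. Peeling off these exceptional primes one at a time, each prime $p\in(D_1,D)$ contributes a factor at most $\sum_{D_1<p<D}\varrho_3(p)/p\le(k+1)\sum_{D_1<p<D}p^{-1}=(k+1)\log 293+o(1)$ by Mertens' theorem, and summing over the number ($\le 293$) of them assembles into the explicit constant $\delta_2$; one is deliberately generous here, which is why the remark after Theorem \ref{thrm:MainTheorem} can assert that the $\kappa_i$ are in fact far smaller. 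The decisive gain is that whenever an exceptional prime is present the complementary factor exceeds $D_1=N^{1/1168}$ whereas $g$ is supported on $[1,D]=[1,N^{293/1168}]$, so the surviving logarithm obeys $\log(D/d)\le\log(D/D_1)=(1+4\varpi)^{-1}\log D\,(1+o(1))$; carried through the power of $\log D$ that controls the $\varrho_3\theta_3$-sum of Lemma \ref{lmm:Rho3Estimates}, this is a loss of a factor $\delta_1=(1+4\varpi)^{-k}$ against the main term. The three entries $1$, $\delta_2^2$, $(\log 293)(k+1)$ of the bracket defining $\kappa_3$ correspond to the three placements of the exceptional primes: a single one in the gcd variable $m$ of the first sum (giving $(\log 293)(k+1)$), one in each copy of the truncation variable in the expansion $(\mathcal A_3-\widetilde{\mathcal A}_3)^2+2\widetilde{\mathcal A}_3(\mathcal A_3-\widetilde{\mathcal A}_3)$ of the second sum (giving $\delta_2^2$), and one in just one copy (using $\delta_2\le\tfrac12(1+\delta_2^2)$, giving the $1$).

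The remaining sums are then evaluated by Lemma \ref{lmm:Rho3Estimates} --- through both the asymptotic for $\mathcal A_3(d)$ and that for $\sum_d\varrho_3(d)\theta_3(d)/d$ --- and normalised against the main term of $M_1^*$, which is of size $\asymp\mathfrak S\tfrac{A}{\phi(A)}\binom{2l-2}{l-1}(\log D)^{k+2l-1}/(k+2l-1)!$: here $\binom{2l-2}{l-1}$ arises from the Beta-integral of partial summation against the weight $(\log D/m)^{2l-2}$, and the $\binom{k+2l-1}{k+1}=\frac{(k+2l-1)!}{(k+1)!(2l-2)!}$ in $\kappa_3$ is the factorial rearrangement matching the natural normalisations of the error sums and of the main term; the lower-order terms $O((\log D)^{l-2+\epsilon})$ of Lemma \ref{lmm:Rho3Estimates} feed into the $1+o(1)$.

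The main obstacle is bookkeeping rather than any new idea: every constant must be kept explicit, one must track how many factors of $\log 293$ and $\delta_1$ accrue as several exceptional primes are removed, and one must check that the $\mu$-cancellation inside $\mathcal A_3-\widetilde{\mathcal A}_3$ --- together with the removal of the very small prime factors via $D_0=(\log D)^{1/k}$, exactly as in the proof of Lemma \ref{lmm:Rho3Estimates} --- keeps these corrections genuinely of lower order. This is the computation behind \cite[inequality (4.21)]{Zhang}, carried out with $\varrho_3$ and with the exponents $k+1$, $l-1$ in place of Zhang's $k_0$, $l_0$.
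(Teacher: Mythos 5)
Your proposal is correct in substance and follows essentially the same route as the paper: the paper's proof is simply a citation to \S 4 of Zhang with $\varrho_1,\mathfrak{S},k_0,l_0$ replaced by $\varrho_3,\mathfrak{S}A/\phi(A),k+1,l-1$, and your sketch reconstructs exactly that argument — the gcd diagonalization into $\sum_m \varrho_3(m)\mathcal{A}_3(m)^2/(m\theta_3(m))$, the observation that every discrepancy term carries a prime factor exceeding $D_1$ (of which at most $293$ fit below $D$), the Mertens sum giving $(\log 293)(k+1)$, the $x^{1/4}$-type truncation giving $\delta_1=(1+4\varpi)^{-k}$, and normalization via Lemma \ref{lmm:Rho3Estimates} producing the binomial factors. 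Your two-term split of $M_1^*-M_1$ is a mild repackaging of Zhang's $\Sigma_1,\Sigma_2,\Sigma_3$ bookkeeping rather than a genuinely different method, and the identification of where $1$, $\delta_2^2$ and $(\log 293)(k+1)$ arise is consistent with the stated $\kappa_3$.
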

\begin{proof}
The proof is entirely analogous to \S 4 of \cite{Zhang}, using Lemma \ref{lmm:Rho3Estimates} in place of \cite[Lemma 2 and Lemma 3]{Zhang} and replacing $\varrho_1$, $\mathfrak{S}$, $k_0$ and $l_0$ with $\varrho_3$, $\mathfrak{S}A/\phi(A)$, $k+1$ and $l-1$ in the relevant places.
\end{proof}
\begin{lmm}\label{lmm:EasypBound}
Let 
\[M_3^*=\sum_{\substack{p\nmid A\\ p\le D_1}}\frac{2(p-k)\log{p}}{(k+1)p-k}\sum_{p|d,e}\frac{\lambda_d\lambda_e\varrho_3([d,e])}{[d,e]}.\]
Then we have
\[|M_3-M_3^*|\le 2\varpi\kappa_3\frac{A}{\phi(A)}\binom{2l-2}{l-1}\frac{\mathfrak{S}(\log{D})^{k+2l}}{(k+2l-1)!}(1+o(1)),\]
where $\kappa_3$ is defined in Lemma \ref{lmm:EasyBound}.
\end{lmm}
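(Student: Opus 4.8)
The plan is to follow \S 5 of \cite{Zhang}, reducing the bound to Lemma \ref{lmm:EasyBound} by extracting the prime $p$ from $d$ and $e$.

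Since $\lambda_d$ is supported on $d\mid\mathcal{P}$, the condition $p\mid d$ (with $\lambda_d\neq 0$) forces $p\le D_1$, so the outer sum in $M_3$ is effectively over $p\nmid A$ with $p\le D_1$, which is the range occurring in $M_3^*$. Fixing such a $p$ and writing $d=pd'$, $e=pe'$ with $d',e'$ square-free and coprime to $p$, we have $[d,e]=p[d',e']$, multiplicativity of $\varrho_3$ gives $\varrho_3([d,e])=\varrho_3(p)\varrho_3([d',e'])$, and $\lambda_{pd'}=-\mu(d')g(pd')$ (and similarly for the Ho--Tsang weights occurring in $M_3^*$), so $\lambda_{pd'}\lambda_{pe'}$ is the product of the sieve weights at truncation level $D/p$. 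Hence
\[M_3-M_3^*=\sum_{\substack{p\nmid A\\ p\le D_1}}\frac{2(p-k)\log p}{(k+1)p-k}\cdot\frac{\varrho_3(p)}{p}\,\Delta(p),\]
where $\Delta(p)$ is precisely the quantity bounded in Lemma \ref{lmm:EasyBound}, but with $D$ replaced by $D/p$ and with the extra condition $(d',p)=(e',p)=1$. This condition only modifies the local factor at the single prime $p$, so the proof of Lemma \ref{lmm:EasyBound} (equivalently \S 4 of \cite{Zhang}), applied with the obvious coprimality variant of Lemma \ref{lmm:Rho3Estimates}, yields
\[|\Delta(p)|\le \kappa_3\,\frac{A}{\phi(A)}\binom{2l-2}{l-1}\frac{\mathfrak{S}}{(k+2l-1)!}\Bigl(\log\tfrac{D}{p}\Bigr)^{k+2l-1}(1+o(1)),\]
uniformly for $p\le D_1$.

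It then remains to sum over $p$. The $O(1)$ primes with $p\le k$ contribute $O\bigl((\log D)^{k+2l-1}\bigr)=o\bigl((\log D)^{k+2l}\bigr)$. For $p>k$ we have $\tfrac{2(p-k)\log p}{(k+1)p-k}\cdot\tfrac{\varrho_3(p)}{p}=\tfrac{2\log p}{p}(1+O(k/p))$, and, as $\sum_p\tfrac{\log p}{p^2}$ converges, the $O(k/p)$ term again contributes only $O\bigl((\log D)^{k+2l-1}\bigr)$. Partial summation against Mertens' estimate $\sum_{p\le t}\tfrac{\log p}{p}=\log t+O(1)$ then gives
\[\sum_{\substack{p\nmid A\\ p\le D_1}}\frac{2(p-k)\log p}{(k+1)p-k}\cdot\frac{\varrho_3(p)}{p}\Bigl(\log\tfrac{D}{p}\Bigr)^{k+2l-1}=(1+o(1))\,2\int_0^{\log D_1}(\log D-t)^{k+2l-1}\,dt\le (1+o(1))\,\frac{2(\log D)^{k+2l}}{k+2l}.\]
Combining this with the bound for $|\Delta(p)|$,
\[|M_3-M_3^*|\le\frac{2}{k+2l}\,\kappa_3\,\frac{A}{\phi(A)}\binom{2l-2}{l-1}\frac{\mathfrak{S}(\log D)^{k+2l}}{(k+2l-1)!}(1+o(1)),\]
and since $k+2l\ge 1168=\varpi^{-1}$ for our choice of parameters we have $\tfrac{2}{k+2l}\le 2\varpi$, which is the assertion.

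The one genuinely delicate point is the \emph{uniformity} in $p$ of the bound for $\Delta(p)$: Lemma \ref{lmm:EasyBound} is stated for the fixed truncation $D$, whereas here $D/p$ ranges over an interval of multiplicative length $N^{\varpi}$, and one must check that inserting the condition $(d',p)=(e',p)=1$ and passing to the coprimality version of Lemma \ref{lmm:Rho3Estimates} does not inflate the error terms. Everything else is the partial summation above together with the inequality $k+2l\ge\varpi^{-1}$.
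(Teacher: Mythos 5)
Your proposal follows the same skeleton as the paper's argument: fix $p\le D_1$, view $M_3-M_3^*$ for each such $p$ as a \S 4-of-\cite{Zhang} comparison between the $\mathcal{P}$-restricted and the unrestricted weights, and then sum over $p$ against $\tfrac{2(p-k)\log p}{(k+1)p-k}$. The differences are in the two technical steps. First, you factor $p$ completely out of $d$ and $e$, so the inner difference becomes a Lemma \ref{lmm:EasyBound}-type quantity at truncation level $D/p$ with the extra conditions $(d',p)=(e',p)=1$; the paper instead keeps the level at $D$ (the prime stays inside the arguments of $g$, hence of $\mathcal{A}_3$) and removes it with the inequality \eqref{eq:A3pInequality}, at the cost only of explicit local factors $\theta_3(p)=1+O(k/p)$. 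That device sidesteps precisely the uniformity question you flag: one never needs to know how $\kappa_3$ depends on the truncation level. Your uniformity claim is plausible --- replacing $D$ by $D/p$ with $p\le D_1$ only decreases the ratios $\log(x^{1/4}/p)/\log(D/p)$ and $\log(D/p)/\log D_1$ that generate $\delta_1$ and the $\log 293$ term, and the coprimality variant of Lemma \ref{lmm:Rho3Estimates} only inserts factors $\theta_3(p)^{O(1)}=1+O(k/p)$ which are absorbed after summing over $p$ --- but as written it is an assertion, and to make your route complete you must either verify it or simply adopt the paper's use of \eqref{eq:A3pInequality}.

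Second, your summation over $p$ retains the decay of $(\log(D/p))^{k+2l-1}$ and yields the constant $2/(k+2l)$, which you convert to $2\varpi$ via $k+2l\ge\varpi^{-1}$; that hypothesis is not part of the lemma (though it holds for the eventual choice $k=4.5\times 10^6$, $l=300$), so strictly you prove a conditional form of the statement. It is worth noting that the cruder step $(\log(D/p))^{k+2l-1}\le(\log D)^{k+2l-1}$ reproduces the paper's own computation, namely a factor $(2+o(1))\log D_1\approx 8\varpi\log D$ per $(\log D)^{k+2l-1}$, which is roughly four times the constant asserted in the lemma (immaterial to the main theorem, since $\kappa_3\le\exp(-1000)$); so your sharper partial summation is in fact what delivers the stated $2\varpi$ when $k+2l\ge\varpi^{-1}$. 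One small point: by the Heath--Brown normalization every prime $p\le k$ divides $A$, so the terms with $p\le k$, $p\nmid A$ simply do not occur; your ``$O(1)$ primes with $p\le k$'' remark is vacuous rather than an estimate that needs making.
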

\begin{proof}
We first fix $p$ and consider the difference in the inner sums over $d$ and $e$. This inner sum can be evaluated by essentially the same argument as section \S4 of \cite{Zhang}. The condition $p|d,e$ corresponds to $p|(d,e)$, which in the notation of  \cite[section \S4]{Zhang} introduces the condition $p|d_0$. Writing $d_0$ in place of $d_0/p$ then gives in place of the sums $\Sigma_1,\Sigma_2$ and $\Sigma_3$ the sums
\begin{align*}
\Sigma_{1,p}&=\sum_{d_0\le x^{1/4}/p}\sum_{d_1}\sum_{d_2}\frac{\mu(d_1d_2)\varrho_3(pd_0d_1d_2)}{pd_0d_1d_2}g(pd_0d_1)g(pd_0d_1),\\
\Sigma_{2,p}&=\sum_{\substack{d_0\le x^{1/4}/p\\ d_0|\mathcal{P}}}\sum_{d_1|\mathcal{P}}\sum_{d_2|\mathcal{P}}\frac{\mu(d_1d_2)\varrho_3(pd_0d_1d_2)}{pd_0d_1d_2}g(pd_0d_1)g(pd_0d_1),\\
\Sigma_{3,p}&=\sum_{\substack{x^{1/4}/p<d_0\le D/p\\ d_0\nmid\mathcal{P}}}\sum_{d_1}\sum_{d_2}\frac{\mu(d_1d_2)\varrho_3(pd_0d_1d_2)}{pd_0d_1d_2}g(pd_0d_1)g(pd_0d_1).
\end{align*}
The analysis now follows essentially as before. When \cite[Lemma 3]{Zhang} is used to estimate the terms $\mathcal{A}_1(d)$ we can instead use the inequality 
\begin{equation}\mathcal{A}_3(dp)\le \theta_3(p)\mathcal{A}_3(d)+O((\log{D})^{l-2+\epsilon}).\label{eq:A3pInequality}\end{equation}
The only other additional constraint is that $(d,p)=1$, which can be dropped for an upper bound in the final estimations.

This argument then gives
\begin{align}
|\Sigma_{1,p}|+|\Sigma_{2,p}|+|\Sigma_{3,p}|\le \frac{\varrho_3(p)\theta_3(p)^2}{p}\kappa_3\frac{A}{\phi(A)}\binom{2l-2}{l-1}\frac{\mathfrak{S}(\log{D})^{k+2l-1}}{(k+2l-1)!}.
\end{align}
We now sum this bound over $p$ to obtain a total error of
\begin{align}
&\kappa_3\frac{A}{\phi(A)}\binom{2l-2}{l-1}\frac{\mathfrak{S}(\log{D})^{k+2l-1}}{(k+2l-1)!}\sum_{p\le D_1}\frac{\varrho_3(p)\theta_3(p)^2}{p}\frac{2(p-k)\log{p}}{(k+1)p-k}\nonumber\\
&=\kappa_3\frac{A}{\phi(A)}\binom{2l-2}{l-1}\frac{\mathfrak{S}(\log{D})^{k+2l-1}}{(k+2l-1)!}\sum_{p\le D_1}\left(\frac{2\log{p}}{p}+O\left(\frac{\log{p}}{p^2}\right)\right)\nonumber\\
&= (2+o(1))(\log{D_1})\kappa_3\frac{A}{\phi(A)}\binom{2l-2}{l-1}\frac{\mathfrak{S}(\log{D})^{k+2l-1}}{(k+2l-1)!}.
\end{align}
\end{proof}
\begin{lmm}\label{lmm:HardpBound}
Let
\[M_2^*=\sum_{\substack{p\nmid A\\ p\le D_1}}\frac{2(p-k)\log{p}}{(k+1)p-k}\sum_{p|d}\frac{\lambda_d\lambda_e\varrho_3([d,e])}{[d,e]}.\]
Then we have that
\[|M_2-M_2^*|\le 2\varpi\kappa_3\frac{A}{\phi(A)}\binom{2l-2}{l-1}\frac{\mathfrak{S}(\log{D})^{k+2l}}{(k+2l-1)!}(1+o(1)),\]
where $\kappa_3$ is defined in Lemma \ref{lmm:EasyBound}.
\end{lmm}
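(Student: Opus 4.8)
The plan is to follow the proof of Lemma \ref{lmm:EasypBound}, the one genuinely new difficulty being that the divisibility condition $p\mid d$ occurring in $M_2$ is not symmetric in $d$ and $e$. First I would observe that both $M_2$ and $M_2^*$ are supported on primes $p\le D_1$: in $M_2$ this is forced because our weights vanish unless $d\mid\mathcal P$, so that $p\mid d$ implies $p\mid\mathcal P$, and in $M_2^*$ it is imposed explicitly. Splitting each inner sum over $d,e$ according to whether or not $p\mid e$, the terms with $p\mid d$ \emph{and} $p\mid e$ assemble, after summation over $p$, into exactly $M_3-M_3^*$, which Lemma \ref{lmm:EasypBound} already bounds by the quantity we need. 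So everything reduces to estimating
\[R-R^*:=\sum_{\substack{p\le D_1\\ p\nmid A}}\frac{2(p-k)\log p}{(k+1)p-k}\Biggl(\sum_{\substack{p\mid d,\ p\nmid e}}\frac{\lambda_d\lambda_e\varrho_3([d,e])}{[d,e]}-\sum_{\substack{p\mid d,\ p\nmid e}}\frac{\lambda_d^*\lambda_e^*\varrho_3([d,e])}{[d,e]}\Biggr),\]
where $\lambda^*$ denotes the untruncated sieve weights $\lambda_d^*=\mu(d)g(d)$ used by Ho and Tsang.

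For a fixed $p$ I would substitute $d=pd'$ with $p\nmid d'$. Since all moduli are square-free and $p\nmid e$, we have $[pd',e]=p\,[d',e]$ and $\varrho_3([pd',e])=\varrho_3(p)\varrho_3([d',e])$, while $\lambda_{pd'}=-\mu(d')g(pd')\mathbf{1}_{d'\mid\mathcal P}$ and $\lambda^*_{pd'}=-\mu(d')g(pd')$. The bracketed expression therefore equals $-\varrho_3(p)/p$ times the truncation error — the effect of restricting both summation variables to divisors of $\mathcal P$ — in the sum
\[\sum_{\substack{d',e\\(d'e,p)=1}}\mu(d')\mu(e)g(pd')g(e)\frac{\varrho_3([d',e])}{[d',e]}.\]
This differs from the sums handled in \S4 of \cite{Zhang} and in Lemma \ref{lmm:EasyBound} only in that the $d'$-cutoff $g$ is replaced by $y\mapsto g(py)$ (equivalently $D$ by $D/p$) and in the coprimality constraint $(d'e,p)=1$, both of which can only help in an upper bound. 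Running the \S4 argument on it, with Lemma \ref{lmm:Rho3Estimates} in place of the corresponding lemmas of \cite{Zhang}, with $\mathcal A_3(pd')\le\theta_3(p)\mathcal A_3(d')+O((\log D)^{l-2+\epsilon})$ (cf.\ \eqref{eq:A3pInequality}) used whenever $\mathcal A_3$ of a multiple of $p$ arises, and with $g(py)\le g(y)$, one obtains for the inner difference at $p$ the bound
\[\le\frac{\varrho_3(p)\theta_3(p)^2}{p}\,\kappa_3\,\frac{A}{\phi(A)}\binom{2l-2}{l-1}\frac{\mathfrak S(\log D)^{k+2l-1}}{(k+2l-1)!}\]
(the precise power of $\theta_3(p)=1+O(1/p)$ being immaterial below). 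Summing over $p\le D_1$ exactly as in the proof of Lemma \ref{lmm:EasypBound}, so that
\[\sum_{\substack{p\le D_1\\ p\nmid A}}\frac{2(p-k)\log p}{(k+1)p-k}\cdot\frac{\varrho_3(p)\theta_3(p)^2}{p}=\sum_{p\le D_1}\Bigl(\frac{2\log p}{p}+O\Bigl(\frac{\log p}{p^2}\Bigr)\Bigr)=(2+o(1))\log D_1\ll\varpi\log N,\]
then yields the asserted bound on $|R-R^*|$, and hence on $|M_2-M_2^*|$.

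The step I expect to be the main obstacle is the invocation of the \S4 argument of \cite{Zhang} for this modified sum. In the symmetric situation of Lemma \ref{lmm:EasypBound} the substitution $d_0\mapsto d_0/p$ applied to both variables makes the reduction to the untruncated estimate transparent; here I must instead verify that pushing the one-sided cutoff $g(p\,\cdot)$ and the asymmetric coprimality condition $(d'e,p)=1$ through the M\"obius inversions of \S4 neither destroys the delicate near-cancellation between the truncated and untruncated main terms that is responsible for the $\kappa_3$-saving, nor costs more than the explicit factor $\varrho_3(p)/p$ together with harmless powers of $\theta_3(p)$. The remainder of the argument is a direct transcription of \cite{Zhang} and \cite{HoTsang}, and the exact value of the absolute constant in the final bound is of no consequence, since in the application $\kappa_3\le e^{-1000}$.
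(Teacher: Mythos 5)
Your proposal follows essentially the same route as the paper: pull the prime $p$ out of the $d$-variable, recognise the resulting inner difference as a Zhang-\S 4 truncation error with the one-sided cutoff $g(p\,\cdot)$, bound it via Lemma \ref{lmm:Rho3Estimates} and the inequality \eqref{eq:A3pInequality} while dropping the coprimality conditions with $p$ for an upper bound, and then sum over $p\le D_1$ using $\sum_{p\le D_1}2\log p/p$. The one organizational difference is your preliminary split according to whether $p\mid e$: you dispose of the $p\mid e$ configuration by citing Lemma \ref{lmm:EasypBound}, whereas the paper keeps a single parametrization $d=pd_0d_1$, $e=d_0d_2$ with $(d_0,p)=(d_1,p)=(d_2,p)=1$ and instead unfolds the condition $(d_2,p)=1$ by a further M\"obius inversion, which is what produces the extra $\frac{\varrho_3(p)}{p}\mathcal{A}_3(dp)^2$ term in its displays; note that this parametrization as written covers only the $p\nmid e$ terms, so your explicit handling of the $p\mid e$ part is, if anything, the more complete bookkeeping.

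The only point to flag is the final constant. Since you bound the $p\mid e$ piece (via Lemma \ref{lmm:EasypBound}) and the $p\nmid e$ piece separately, and each bound is of the full size $2\varpi\kappa_3\frac{A}{\phi(A)}\binom{2l-2}{l-1}\frac{\mathfrak{S}(\log{D})^{k+2l}}{(k+2l-1)!}$, your argument proves the lemma with $2\varpi$ replaced by roughly $4\varpi$, which is strictly weaker than the stated inequality (and you pass over this when you say the bound on $|R-R^*|$ gives the bound on $|M_2-M_2^*|$). This is harmless in the application --- it merely turns the $6\varpi$ inside $\kappa_3\bigl(6\varpi+\frac{\log N}{\log D}\bigr)$ in Proposition \ref{prpstn:MainProp} into $10\varpi$, negligible both against $\frac{\log N}{\log D}$ and because $\kappa_3\le\exp(-1000)$ --- but you should either state your version of the lemma with the larger constant and propagate it, or remark explicitly (as you do implicitly at the end) that the absolute constant multiplying $\varpi\kappa_3$ is immaterial for Theorem \ref{thrm:MainTheorem}.
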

\begin{proof}
Analogously to \S 4 of \cite{Zhang}, we first bound the difference $|M_2-M_2^*|$ by
\begin{equation}
\sum_{p\nmid A}\frac{2(p-k)\log{p}}{(k+1)p-k}(|\Sigma_{1,p}^*|+|\Sigma_{2,p}^*|+|\Sigma_{3,p}^*|),
\end{equation}
where
\begin{align}
\Sigma_{1,p}^*&=\sum_{\substack{d_0\le x^{1/4}\\ (d_0,p)=1}}\sum_{(d_1,p)=1}\sum_{(d_2,p)=1}\frac{\mu(d_1d_2p)\varrho_3(pd_0d_1d_2)}{pd_0d_1d_2}g(pd_0d_1)g(d_0d_2),\\
\Sigma_{2,p}^*&=\sum_{\substack{d_0\le x^{1/4}\\ (d_0,p)=1\\ d_0|\mathcal{P}}}\sum_{\substack{(d_1,p)=1\\ d_1|\mathcal{P}}}\sum_{\substack{(d_2,p)=1\\ d_2|\mathcal{P}}}\frac{\mu(d_1d_2p)\varrho_3(pd_0d_1d_2)}{pd_0d_1d_2}g(pd_0d_1)g(d_0d_2),\\
\Sigma_{3,p}^*&=\sum_{\substack{x^{1/4}<d_0\le D\\ d_0\nmid \mathcal{P}}}\sum_{(d_1,p)=1}\sum_{(d_2,p)=1}\frac{\mu(d_1d_2p)\varrho_3(pd_0d_1d_2)}{pd_0d_1d_2}g(pd_0d_1)g(d_0d_2).
\end{align}
We first consider $\Sigma_{1,p}$. we wish to put this into a simpler form. Since $\varrho_3$ is supported only on square-free integers, we can insert the conditions $(d_0,d_1)=(d_0,d_2)=(d_1,d_2)=1$. With these conditions we may split up the arguments of $\mu$ and $\rho_3$ due to their multiplicativity. We then rewrite the condition $(d_1,d_2)=1$ using M\"obius inversion. This gives
\begin{align}
\Sigma_{1,p}^*&=\sum_{\substack{d_0\le x^{1/4}\nonumber\\ (d_0,p)=1}}\sum_{(d_1,d_0p)=1}\sum_{(d_2,d_0p)=1}\frac{\mu(d_1)\mu(d_2)\mu(p)\varrho_3(p)\varrho_3(d_0)\varrho_3(d_1)\varrho_3(d_2)}{pd_0d_1d_2}\\
&\qquad\times g(pd_0d_1)g(d_0d_2)\sum_{q_1|d_1,d_2}\mu(q_1)\nonumber\\
&=\frac{-\varrho_3(p)}{p}\sum_{q}\frac{\mu(q)\varrho(q)^2}{q^2}\sum_{d_0\le x^{1/4}}\frac{\varrho_3(d_0)}{d_0}\sum_{(d_1,d_0pq_1)=1}\frac{\varrho_3(d_1)\mu(d_1)}{d_1}g(pd_0d_1q_1)\nonumber\\
&\qquad\times\sum_{(d_2,d_0pq_1)=1}\frac{\mu(d_2)\varrho_3(d_2)}{d_2}g(d_0d_2q_1)\nonumber\\
&=\frac{-\varrho_3(p)}{p}\sum_{q}\frac{\mu(q)\varrho(q)^2}{q^2}\sum_{d_0\le x^{1/4}}\frac{\varrho_3(d_0)}{d_0}\mathcal{A}_3(d_0pq)\sum_{(d_2,d_0pq_1)=1}\frac{\mu(d_2)\varrho_3(d_2)}{d_2}g(d_0d_2q_1).
\end{align}
We rewrite the condition $(d_2,p)=1$ in the inner sum by M\"obius inversion. This gives
\begin{align}
\sum_{(d_2,d_0pq_1)=1}\frac{\mu(d_2)\varrho_3(d_2)}{d_2}g(d_0d_2q_1)&=\sum_{(d_2,d_0q_1)=1}\frac{\mu(d_2)\varrho_3(d_2)}{d_2}g(d_0d_2q_1)\sum_{q_2|p,d_2}\mu(q_2)\nonumber\\
&=\sum_{q_2|p}\frac{\varrho_3(q_2)}{q_2}\sum_{(d_2,d_0q_1q_2)=1}\frac{\mu(d_2)\varrho_3(d_2)}{d_2}g(d_0d_2q_1q_2)\nonumber\\
&=\mathcal{A}_3(d_0q_1)+\frac{\varrho_3(p)}{p}\mathcal{A}_3(d_0q_1p).
\end{align}
Thus we obtain
\begin{align}
\Sigma_{1,p}^*&=\frac{-\varrho_3(p)}{p}\sum_{(d_0,p)=1}\frac{\varrho_3(d_0)}{d_0}\sum_{(q,pd_0)=1}\frac{\mu(q)\varrho_3(q)^2}{q^2}\nonumber\\
&\qquad\times\left(\mathcal{A}_3(d_0pq)\mathcal{A}_3(d_0q)+\frac{\varrho_3(p)}{p}\mathcal{A}_3(d_0pq)^2\right).
\end{align}
Analogously to the argument in \cite{Zhang}, we can restrict the sum over $q$ to $q\le D_0$ at a cost of an error of size $O(D_0^{-1}p^{-1}(\log{D})^B)$ for some constant $B$. Letting $d=d_0q$ then gives
\begin{equation}
\Sigma_{1,p}^*=\frac{-\varrho_3(p)}{p}\sum_{\substack{d\le x^{1/4}D_0\\(d,p)=1}}\frac{\varrho_3(d)\theta_3^*(d)}{d}\left(\mathcal{A}_3(dp)\mathcal{A}_3(d)+\frac{\varrho_3(p)}{p}\mathcal{A}_3(dp)^2\right)+O\left(\frac{(\log{D})^B}{pD_0}\right),
\end{equation}
where
\begin{equation}
\theta_3^*(d)=\sum_{\substack{d_0q=d\\ d_0<x^{1/4}\\ q<D_0}}\frac{\mu(q)\varrho_3(q)}{q}.
\end{equation}
An analogous argument can be applied to $\Sigma_{2,p}^*$ and $\Sigma_{3,p}^*$ which gives
\begin{align*}
\Sigma_{2,p}^*&=\frac{-\varrho_3(p)}{p}\sum_{\substack{d\le x^{1/4}D_0\\ d|\mathcal{P}\\ (d,p)=1}}\frac{\varrho_3(d)\theta_3^*(d)}{d}\left(\mathcal{A}^*_3(dp)\mathcal{A}^*_3(d)+\frac{\varrho_3(p)}{p}\mathcal{A}^*_3(dp)^2\right)+O\left(\frac{(\log{D})^B}{pD_0}\right),\\
\Sigma_{3,p}^*&=\frac{-\varrho_3(p)}{p}\sum_{\substack{d\le x^{1/4}D_0\\ d|\mathcal{P}\\ (d,p)=1}}\frac{\varrho_3(d)\tilde{\theta}_3^*(d)}{d}\left(\mathcal{A}_3(dp)\mathcal{A}_3(d)+\frac{\varrho_3(p)}{p}\mathcal{A}_3(dp)^2\right)+O\left(\frac{(\log{D})^B}{pD_0}\right),
\end{align*}
where
\begin{align}
\mathcal{A}^*_3(d)&=\sum_{\substack{(r,d)=1\\ r|\mathcal{P}}}\frac{\mu(r)\varrho_3(r)g(dr)}{r},\\
\tilde{\theta}(d)&=\sum_{\substack{d_0q=d\\x^{1/4}<d_0}}\frac{\mu(q)\varrho_3(q)}{q}.
\end{align}
The rest of Zhang's argument now essentially follows as before. The differences are, as in Lemma \ref{lmm:EasypBound}, when Zhang uses the asymptotic expression for $\mathcal{A}_1(d)$ we instead use the upper bound from the inequality \eqref{eq:A3pInequality}, and in the final estimations from the sums over $d$ we drop the condition $(d,p)=1$ to obtain an upper bound. This gives us
\begin{align}
|\Sigma_{1,p}|+|\Sigma_{2,p}|+|\Sigma_{3,p}|&\le \kappa_3\frac{A}{\phi(A)}\left(\frac{\varrho_3(p)\theta_3(p)}{p}+\frac{\varrho_3(p)\theta_3(p)^2}{p}\right)\binom{2l-2}{l-1}\nonumber\\
&\qquad\times\frac{\mathfrak{S}(\log{D})^{k+2l-1}}{(k+2l-1)!}(1+o(1)).
\end{align}
We now perform the summation over $p$. We see that
\begin{align}
\sum_{p<D_1}\frac{2(k-p)\log{p}}{(k+1)p-k}\left(\frac{\varrho_3(p)\theta_3(p)}{p}+\frac{\varrho_3(p)\theta_3(p)^2}{p^2}\right)&=\sum_{p\le D_1}\left(\frac{2\log{p}}{p}+O\left(\frac{\log{p}}{p^2}\right)\right)\nonumber\\
&\le (2+o(1))(\log{D}).
\end{align}
This gives us the bound stated in the Lemma.
\end{proof}
\begin{lmm}\label{lmm:MiSizes}
Let $M_1^*,M_2^*$ and $M_3^*$ be defined as in Lemmas \ref{lmm:EasyBound}, \ref{lmm:EasypBound} and \ref{lmm:HardpBound}. We have that
\begin{align*}
M_1^*&=\frac{\mathfrak{S}A}{\phi(A)}\binom{2l-2}{l-1}\frac{1}{(k+2l-1)!}(\log{D})^{k+2l-1}+O\left(\frac{(\log{D})^{k+2l-1}}{\log\log{D}}\right)\\
|M_2^*|&\le\frac{\mathfrak{S}A}{\phi(A)}\binom{2l-2}{l-1}\frac{2(l-1)}{l(k+2l)!}(\log{D})^{k+2l}+O\left(\frac{(\log{D})^{k+2l}}{\log\log{D}}\right)\\
|M_3^*|&\le\frac{\mathfrak{S}A}{\phi(A)}\binom{2l-2}{l-1}\frac{2}{(k+2l)!}(\log{D})^{k+2l}+O\left(\frac{(\log{D})^{k+2l}}{\log\log{D}}\right)
\end{align*}
\end{lmm}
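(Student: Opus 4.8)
The plan is to recognise all three evaluations as instances of the main-term computation of \cite[Lemma~1]{Zhang} (equivalently, of the computation of Ho and Tsang \cite{HoTsang} underlying their analogue of $S_3$), carried out with the dictionary $\varrho_1\mapsto\varrho_3$, $k_0\mapsto k+1$, $l_0\mapsto l-1$, $\mathfrak{S}\mapsto\mathfrak{S}A/\phi(A)$. The substitution $k_0\mapsto k+1$ is forced because $\varrho_3(p)=k+1-k^2/p$ behaves at primes like the singular-series function of a $(k+1)$-tuple; then $l_0\mapsto l-1$ keeps the degree $k+l=(k+1)+(l-1)$ of the polynomial in $\lambda_d=\mu(d)g(d)$ unchanged. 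The only inputs beyond routine bookkeeping are the two asymptotics of Lemma~\ref{lmm:Rho3Estimates}.

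For $M_1^*$ I would run the classical argument: write $d=d_0d_1$, $e=d_0d_2$ with $d_0=(d,e)$ and $(d_1,d_2)=1$; factor the summand by the multiplicativity of $\varrho_3$ on squarefrees; strip the coprimality conditions by M\"obius inversion, the resulting auxiliary variable being truncated at $D_0=(\log D)^{1/k}$ with negligible loss as in \cite{Zhang}; and recognise the inner sums over $d_1,d_2$ as $\mathcal{A}_3(d_0)$. Inserting the first estimate of Lemma~\ref{lmm:Rho3Estimates} for $\mathcal{A}_3(d_0)$ reduces $M_1^*$ to $\bigl(\mathfrak{S}A/\phi(A)\bigr)^2\bigl((l-1)!\bigr)^{-2}$ times $\sum_{d_0}\frac{\varrho_3(d_0)\theta_3(d_0)}{d_0}\bigl(\log\tfrac{D}{d_0}\bigr)^{2l-2}$, and partial summation against the asymptotic of Lemma~\ref{lmm:Rho3Estimates} for $\sum_d\varrho_3(d)\theta_3(d)/d$ turns this into a Beta integral equal to $\frac{\mathfrak{S}^{-1}\phi(A)}{A}\frac{(2l-2)!}{(k+2l-1)!}(\log D)^{k+2l-1}$ up to admissible errors. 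Collecting constants gives the asserted main term $\mathfrak{S}\frac{A}{\phi(A)}\binom{2l-2}{l-1}\frac{(\log D)^{k+2l-1}}{(k+2l-1)!}$, the error absorbed into $O\!\left(\frac{(\log D)^{k+2l-1}}{\log\log D}\right)$.

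The evaluation of $M_3^*$ has the same skeleton, with one extra twist: $p\mid d$ and $p\mid e$ force $p\mid d_0$, and writing $d_0=pd_0'$ simply replaces the support parameter $D$ by $D/p$ in both weights; thus the inner sum over $d',e'$ is itself an $M_1^*$-sum with $D$ replaced by $D/p$. Since also $\frac{2(p-k)\log p}{(k+1)p-k}\cdot\frac{\varrho_3(p)}{p}=\frac{2\log p}{p}+O\!\left(\frac{\log p}{p^2}\right)$, the $M_1^*$ asymptotic and partial summation (using $\sum_{p\le y}\frac{2\log p}{p}=2\log y+O(1)$) give
\[
M_3^*=\frac{\mathfrak{S}A}{\phi(A)}\binom{2l-2}{l-1}\frac{2\bigl((\log D)^{k+2l}-(\log(D/D_1))^{k+2l}\bigr)}{(k+2l)!}+O\!\left(\frac{(\log D)^{k+2l}}{\log\log D}\right),
\]
and discarding the (positive) subtracted term gives the claimed bound for $|M_3^*|$.

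The case $M_2^*$ genuinely requires the Ho--Tsang computation and is where I expect the work to lie. Here $p\mid d$ splits into $p\mid(d,e)$ --- contributing exactly the $M_3^*$-term above --- and $p\mid d/(d,e)$, in which (writing $d/(d,e)=pd_1'$) one inner sum is $\mathcal{A}_3(pd_0)$ and the other $\mathcal{A}_3(d_0)$, and which carries an extra sign $\mu(p)=-1$. Combining, $M_2^*$ equals, up to admissible errors,
\[
2K\int_0^{\log D_1}\!\!\int u^k(L-t-u)^{l-1}\bigl[(L-t-u)^{l-1}-(L-u)^{l-1}\bigr]\,du\,dt\ \le\ 0,
\]
where $L=\log D$, $t=\log p$ and $K=\frac{\mathfrak{S}A}{\phi(A)}\cdot\frac{1}{k!\,((l-1)!)^2}$. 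To bound $|M_2^*|$ one expands $(L-u)^{l-1}=\bigl((L-t-u)+t\bigr)^{l-1}$ by the binomial theorem, enlarges each resulting $t$- and $u$-integral to its full range (each summand being nonnegative), evaluates the Beta integrals, and collapses the sum over the binomial index by the hockey-stick identity $\sum_{j=1}^{l-1}\binom{l-1}{j}j!\,(2l-2-j)!=\bigl((l-1)!\bigr)^2\binom{2l-2}{l}$; since $\binom{2l-2}{l}=\tfrac{l-1}{l}\binom{2l-2}{l-1}$ this produces precisely the factor $\tfrac{2(l-1)}{l(k+2l)!}(\log D)^{k+2l}$. The obstacle is exactly this collapse: a crude bound on the asymmetric integrand --- replacing $(L-t-u)^{l-1}$ by $(L-u)^{l-1}$, or $(L-u)^{l-1}-(L-t-u)^{l-1}$ by $(l-1)\,t\,(L-u)^{l-2}$ --- loses a factor of order $k$, so the binomial expansion and the hockey-stick identity are essential. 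Everything else is the routine but lengthy error-term bookkeeping of \cite{Zhang} and \cite{HoTsang}: propagating the $O\bigl((\log D)^{l-2+\epsilon}\bigr)$ error of $\mathcal{A}_3$ through the double sums, handling the $D_0$-truncations of the M\"obius variables, and confirming that the total stays within $O\!\left(\frac{(\log D)^{k+2l}}{\log\log D}\right)$.
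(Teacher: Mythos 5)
Your proposal is correct and follows essentially the same route as the paper: the paper simply cites the estimation of the corresponding terms $M_{2,1},M_{2,2},M_{2,3}$ in \cite[pp.~40--44]{HoTsang} (with the sieve weights rescaled by $(\log D)^{k+l}/(k+l)!$ and the restriction $p\le D_1$ dropped at the point where the sum over $p$ is evaluated, exactly as you enlarge the $t$-range), while you reconstruct that computation explicitly via the dictionary $\varrho_1\mapsto\varrho_3$, $k_0\mapsto k+1$, $l_0\mapsto l-1$, $\mathfrak{S}\mapsto\mathfrak{S}A/\phi(A)$ and Lemma~\ref{lmm:Rho3Estimates}. Your Beta-integral and hockey-stick evaluation reproduces precisely the constants $\binom{2l-2}{l-1}$, $\tfrac{2(l-1)}{l}$ and $2$ of the lemma, so the sketch matches the cited argument in substance.
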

\begin{proof}
This follows from the estimation of the equivalent terms `$M_{2,1},M_{2,2},M_{2,3}$', adapted to our notation, which is performed in \cite[Pages 40-44]{HoTsang}. We note that our sieve weights differ from those used in \cite{HoTsang} only by a constant factor of $(\log{D})^{k+l}/(k+l)!$.

The only difference in the argument is that we have the additional restriction that $p\le D_1$ in the terms $M_2^*$ and $M_3^*$. However, at the point in the argument when the sum over $p$ is evaluated, we may drop this requirement to obtain a bound rather than an asymptotic estimate. Since all further estimations are over terms of the same signs, these bounds correspondingly produce a upper bounds for $|M_2^*|$ and $|M_3^*|$. With further effort one can asymptotically evaluate the terms $M_2^*$ and $M_3^*$, but the loss in our argument here is comparable to the size of $\kappa_3$, which will be small.
\end{proof}
\section{Proof of Proposition \ref{prpstn:MainProp}}
We can now complete the proof of Proposition \ref{prpstn:MainProp}.

The first statement which bounds $S'$ is follows from the argument of \cite[Page 45]{HoTsang}. The result is larger by a factor $(\log{D})^{k+2l}$ since each of our $\lambda_d$ are larger by a constant factor of $(\log{D})^{k+l}/(k+l)!$.

The second and third statements which bound $S_1$ and $S_2(L)$ are the equivalent statements to the bounds \cite[Inequalities (4.20) and (5.6)]{Zhang}. We note that in Zhang's work the linear equations are of the form $L_i(n)=n+h_i$ rather than $An+a_0+h_i$. This essentially leaves the proof of the result for $S_1$ unchanged, but causes a very minor change in the proof of the bound $S_2$. We have
\begin{align}
S_2(L_j)&=\sum_{d,e}\lambda_d\lambda_e\sum_{\substack{N\le n< 2N\\ [d,e]|\Pi(n)}}\theta(L_j(n))\nonumber\\
&=\frac{(\pi(2AN)-\pi(AN))}{\phi(A)}\sum_{d,e}\frac{\lambda_d\lambda_e\varrho_2([d,e])}{\phi([d,e])}+O(E_j)+O(N^\epsilon)\nonumber\\
&=\frac{AN(1+o(1))}{\phi(A)\log{N}}\sum_{d,e}\frac{\lambda_d\lambda_e\varrho_2([d,e])}{\phi([d,e])}+O(E_j)+O(N^\epsilon).\label{eq:S2Expansion}
\end{align}
where $\varrho_2$ is the multiplicative function defined on square-free integers with
\begin{align}
\varrho_2(p)&=\begin{cases}k-1,\qquad&p\nmid A,\\0,&\text{otherwise,}\end{cases}\\
E_j&=\sum_{\substack{d<D^2\\ d|\mathcal{P}, (d,A)=1}}\tau_3(d)\varrho_2(d)\sum_{c\in C_j^*(d)}|\Delta(\theta;Ad,c)|,\\
\Delta(\theta;d,c)&=\sum_{\substack{AN\le n<2AN\\ n\equiv c\pmod{d}}}\theta(n)-\frac{1}{\phi(d)}\sum_{AN\le n<2AN}\theta(n),\\
\mathcal{C}_j^*(d)&=\Bigl\{c:1\le c\le Ad,(c,d)=1,c\equiv h_j+a_0\pmod{A},\nonumber\\
&\qquad\prod_{i=1}^{k}(c-h_j+h_i)\equiv 0\pmod{d}\Bigr\}.
\end{align}
Since $A=O(1)$, $D_1=N^\varpi/A$ and $D=N^{1/4+\varpi}/A$, Zhang's Theorem 2 now bounds $E_j$ by essentially the same argument. We see that, by the Chinese remainder theorem, there is a bijection $\mathcal{C}^*_i(qr)\rightarrow \mathcal{C}^*_i(q)\times\mathcal{C}^*_i(r)$ when $|\mu(qrA)|=1$, which gives the relevant equivalent of \cite[Lemma 5]{Zhang}. The only other change required is a trivial adjustment to the terms in the argument following Zhang's inequality (10.6) to take into account the additional congrunce restriction $c\equiv h_j+a_o\pmod{A}$. 

The rest of the main analysis of $S_2$ goes through correspondingly. The only change is that in Lemmas 2 and 3 we have $\phi(A)\mathfrak{S}/A$ in place of $\mathfrak{S}$. This causes us  to gain a factor $\phi(A)/A$, which cancels with the factor $A/\phi(A)$ which we have in \eqref{eq:S2Expansion}.

The final statement bounding $S_3$ is a consequence of simply combining the results of Lemmas \ref{lmm:DivisorBasic}, \ref{lmm:EasyBound}, \ref{lmm:EasypBound}, \ref{lmm:HardpBound} and \ref{lmm:MiSizes}.
\bibliographystyle{acm}
\bibliography{bibliography}
\end{document}